\theoremstyle{definition}
\newtheorem{thm}{Theorem}[section]
\newtheorem{definition}[thm]{Definition}
\newtheorem{lem}[thm]{Lemma}
\newtheorem{cor}[thm]{Corollary}
\newtheorem{rei}[thm]{Example}
\numberwithin{equation}{section}
\newcommand{\diag}{{\rm diag}}
\newcommand{\Mat}{{\rm Mat}}
\begin{document}
\title[]{Generalized group determinant gives a necessary and sufficient condition for a subset of a finite group to be a subgroup}
\author[Naoya Yamaguchi and Yuka Yamaguchi]{Naoya Yamaguchi and Yuka Yamaguchi}
\date{\today}
\keywords{group determinant; group algebra; subgroup; group isomorphism; group anti-isomorphism.}
\subjclass[2010]{Primary 20C15; Secondary 05E15.}

\maketitle

\begin{abstract}
We generalize the concept of the group determinant and prove a necessary and sufficient novel condition for a subset to be a subgroup.
This development is based on the group determinant work by Edward Formanek, David Sibley, and Richard Mansfield, where they show that two groups with the same group determinant are isomorphic. 
The derived condition leads to a generalization of this result. 
\end{abstract}

\section{Introduction}
We generalize the concept of the group determinant and prove a necessary and sufficient novel condition for a subset to be a subgroup.
The study of the group determinant by Edward Formanek, David Sibley, and Richard Mansfield shows that two groups with the same group determinant are isomorphic, and we generalize this result. 

Let $G$ be a finite group. 
The group determinant $\Theta(G)$ of $G$ was defined by Julius Wilhelm Richard Dedekind (see, e.g.,~\cite[p.~150]{Hawkins1971}, \cite[p.~224]{van2013history}). 
Ferdinand Georg Frobenius created the character theory of groups by studying the irreducible factorization of the group determinant (see, \cite{Frobenius1968gruppen}, \cite{Frobenius1968gruppencharaktere}). 
For the history on this theory, see, e.g.,~\cite{Curtis2001}, \cite{Hawkins1971}, \cite{Hawkins1972}, \cite{Hawkins1974}, \cite{lam1998representations}, and~\cite{van2013history}. 
On the group determinant, which has played a very important role in the history of mathematics, 
Edward Formanek and David Sibley gave the following theorem in 1991. 

\begin{thm}[{Special case of \cite[Theorem~$5$]{Formanek_1991}}]\label{thm:1.3}
Let $G$ and $H$ be finite groups, $K$ be a field whose characteristic does not divide $|G|$, 
and $\varphi: G \to H$ be a bijection such that $\varphi(e) = e'$, 
where $e$ and $e'$ are the unit elements of $G$ and $H$, respectively. 
Suppose that $\hat{\varphi} (\Theta(G)) = \Theta(H)$. 
Then, map $\varphi$ is either a group isomorphism or a group anti-isomorphism. 
In any case, $G$ is isomorphic to $H$, 
since any group is anti-isomorphic to itself. 
\end{thm}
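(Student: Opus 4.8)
The plan is to read the multiplicative structure of $G$ directly off the monomial coefficients of the group determinant and then to show that the given bijection $\varphi$ is forced to respect that structure up to reversal. Recall that $\Theta(G)=\det\bigl(x_{g_ig_j^{-1}}\bigr)_{1\le i,j\le n}$ with $n=|G|$, and that $\hat{\varphi}$ denotes the induced substitution $x_g\mapsto x_{\varphi(g)}$; the hypothesis says that renaming the variables of $\Theta(G)$ by $\varphi$ produces $\Theta(H)$ as an identity of polynomials. Expanding the determinant as $\sum_{\sigma\in S_n}\sgn(\sigma)\prod_i x_{g_ig_{\sigma(i)}^{-1}}$, a permutation $\sigma$ moving exactly $k$ points contributes the monomial $x_e^{\,n-k}\prod_{\sigma(i)\ne i}x_{g_ig_{\sigma(i)}^{-1}}$, and the degree in the non-identity variables equals the size of the support of $\sigma$ (because distinct group elements make each off-diagonal subscript $\ne e$). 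Since $\varphi(e)=e'$, the two determinants already agree on the pure power $x_e^{\,n}$, and my strategy is to compare, degree by degree in the non-identity variables, the coefficients of $x_e^{\,n-k}(\cdots)$ on the two sides.

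First I would recover the inverse map. The degree-two part comes only from transpositions $(i\,j)$, each contributing $-\,x_{g_ig_j^{-1}}x_{g_jg_i^{-1}}$, so the only quadratic monomials that occur are of the form $x_ax_{a^{-1}}$, with coefficient a nonzero integer multiple of $n$ (namely $-n$ when $a\ne a^{-1}$ and $-n/2$ for an involution). Here the hypothesis $\operatorname{char}K\nmid|G|$ enters for the first time: it makes these counts nonzero in $K$, so the list of inverse pairs is genuinely visible in $\Theta(G)$. Matching these coefficients through $\hat{\varphi}$ forces $\varphi(a^{-1})=\varphi(a)^{-1}$ for all $a$, that is, $\varphi$ commutes with inversion.

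Next I would recover multiplication up to reversal. A $k$-cycle $(i_1\,\cdots\,i_k)$ contributes a monomial whose non-identity subscripts $g_{i_1}g_{i_2}^{-1},\dots,g_{i_k}g_{i_1}^{-1}$ multiply, in cyclic order, to $e$. Reading the degree-three coefficients, only the two $3$-cycles on each triple of positions contribute, and one checks that the monomial $x_ax_bx_c\,x_e^{\,n-3}$ (with $a,b,c\ne e$ distinct) has nonzero coefficient precisely when $c^{-1}\in\{ab,ba\}$; the coefficient is then a nonzero multiple of $n$ and there is no cancellation, since all $3$-cycles have sign $+1$. Comparing these coefficients under $\hat{\varphi}$, again using $\operatorname{char}K\nmid|G|$ to keep the cycle counts nonzero, and combining with the preservation of inverses, shows that for every pair either $\varphi(ab)=\varphi(a)\varphi(b)$ or $\varphi(ab)=\varphi(b)\varphi(a)$. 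Degenerate monomials (repeated or mutually inverse entries) require separate bookkeeping but yield the same conclusion.

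The main obstacle is the global consistency of this dichotomy: a priori $\varphi$ could behave like a homomorphism on some pairs and like an anti-homomorphism on others. To close this gap I would show that the branch cannot be mixed, by propagating the degree-three identities and, inductively, the higher-degree coefficient identities, whose contributions I would organize by the size of the support of the relevant permutations; this rigidity step is the technical heart and is exactly the content of Formanek and Sibley's Theorem~5, of which the present statement is the special case where a bijection realizing $\hat{\varphi}(\Theta(G))=\Theta(H)$ is supplied in advance. Once global consistency holds, $\varphi$ is a group isomorphism or a group anti-isomorphism; and since $g\mapsto g^{-1}$ is an anti-automorphism of any group, composing with it when necessary produces an isomorphism, so that $G\cong H$ in either case.
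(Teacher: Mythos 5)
Your low-degree coefficient analysis is correct and reproduces Mansfield's computations quoted in Lemma~\ref{lem:3.3}: a permutation of support size $k$ contributes a monomial of degree exactly $k$ in the non-identity variables, the transpositions yield the monomials $x_e^{n-2}x_ax_{a^{-1}}$ with coefficient $-n$ (or $-n/2$ for involutions), and the $3$-cycles yield $x_e^{n-3}x_ax_bx_c$ precisely when $c^{-1}\in\{ab,ba\}$; matching coefficients through $\hat{\varphi}$ then gives $\varphi(a^{-1})=\varphi(a)^{-1}$ and, for each pair, $\varphi(ab)\in\{\varphi(a)\varphi(b),\varphi(b)\varphi(a)\}$. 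One secondary inaccuracy: for distinct commuting $a,b,c$ the coefficient is $2n$, and $\operatorname{char}K\nmid n$ does \emph{not} make $2n$ nonzero (take $\operatorname{char}K=2$ and $n$ odd), so over such fields the degree-three data only detects non-commuting triples and your bookkeeping needs repair there.

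The genuine gap is the last step. Having reduced the theorem to the rigidity statement ``if $\varphi(ab)\in\{\varphi(a)\varphi(b),\varphi(b)\varphi(a)\}$ for all $a,b$, then $\varphi$ is globally a homomorphism or globally an anti-homomorphism,'' you do not prove it: you declare it to be ``exactly the content of Formanek and Sibley's Theorem~5,'' which is the very theorem you were asked to prove. As written the argument is circular at its decisive point, and this unmixing lemma is precisely the technical heart that cannot be waved at --- a priori the homomorphism/anti-homomorphism branch could vary from pair to pair, and ruling that out requires a genuine, nontrivial argument. Note that the paper itself does not reprove this statement either; it imports it as a quoted result, and for $K=\mathbb{C}$ it recovers it from Theorem~\ref{thm:4.4} by a completely different mechanism: the group determinant detects invertibility in the group algebra (Lemma~\ref{lem:}), so equality of the two determinants forces $\widetilde{\varphi}$ to map $\mathcal{U}(\mathbb{C}G)$ onto $\mathcal{U}(\mathbb{C}H)$, and Theorem~\ref{thm:4.3} converts unit-preservation together with $\varphi(e)=e'$ into the isomorphism/anti-isomorphism dichotomy. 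To make your proof complete you must either supply the rigidity lemma yourself or switch to this units-preserving route.
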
 

This theorem shows that two groups with the same group determinant are isomorphic. 
Shortly afterwards, Richard Mansfield gave a method to calculate a group multiplication table using the group determinant \cite{Richard}. 
Hence, Formanek, Sibley, and Mansfield showed that a group determinant determines the group. 
We extend their result by generalizing the concept of the group determinant. 

First, we introduce the generalized group determinant. 
Let $S$ be a subset of $G$,  
$x_{s}$ with $s \in S$ be independent commuting variables, and 
$x_{g} = 0$ for $g \in G \setminus S$. 
In addition, let $K$ be a field and $K [x_{s}]$ be the polynomial ring in $[x_{s}]$ over $K$, where $[x_{s}] = \{ x_{s} \mid s \in S \}$. 
We define a generalized group determinant of $G$ to $S$ in $[x_{s}]$ as 
$$
\Theta_{G}(S ; [x_{s}]) := \det{\left( x_{g h^{-1}} \right)_{g, h \in G}} \in K [x_{s}]. 
$$
If $S = G$, then we have $\Theta_{G}(S; [x_{s}]) = \Theta(G)$. 

Let $\left| A \right|$ be the cardinality of a set $A$.  
The main result of this paper is the following theorem, which asserts that 
the generalized group determinant gives a necessary and sufficient condition for a subset of a finite group to be a subgroup. 
Note that if $\varphi : T \rightarrow S$ is a bijection, then $\varphi$ induces the $\mathbb{C}$-algebra isomorphism
$$
\hat{\varphi} : \mathbb{C} \left\{ x_{t} \mid t \in T \right\} \ni x_{t} \mapsto x_{\varphi(t)} \in \mathbb{C} \left\{ x_{s} \mid s \in S \right\}, 
$$
where $x_{t}$ and $x_{s}$ are independent commuting variables. 
 
\begin{thm}[see Theorem~$\ref{thm:3.1}$ for proof]\label{thm:1.1}
Let $G$ be a finite group, $e$ be the unit element of $G$, and $S$ be a subset of $G$ such that $e \in S$ and $|S|$ divides $|G|$. 
Then, $S$ is a group if and only if there exist a group $H$ and a bijective map $\varphi : H \to S$ such that $\varphi(e') = e$ and 
$$
\Theta_{G}(S ; [x_{s}]) = \hat{\varphi} \left( \Theta(H)^{\frac{|G|}{|H|}} \right), 
$$
where $e'$ is the unit element of $H$. 
\end{thm}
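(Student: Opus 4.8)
The plan is to prove the two implications separately, with the forward direction a direct block computation and the backward direction an extraction of the group law of $S$ from the low-degree coefficients of the polynomial identity.

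For the forward implication, suppose $S$ is a subgroup. I would take $H = S$ and $\varphi = \mathrm{id}_{S}$, so that $\hat{\varphi}$ is the identity and the claim reduces to $\Theta_{G}(S;[x_{s}]) = \Theta(S)^{|G|/|S|}$. The matrix $(x_{gh^{-1}})_{g,h\in G}$ has a nonzero entry in position $(g,h)$ only when $gh^{-1}\in S$, i.e. only when $g$ and $h$ lie in the same right coset of $S$. Ordering $G$ by right cosets therefore makes this matrix block diagonal with $|G|/|S|$ blocks, and the block attached to a coset $Sg_{0}$ has $(s_{1}g_{0}, s_{2}g_{0})$-entry $x_{s_{1}s_{2}^{-1}}$, which is exactly the group matrix of $S$. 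Taking determinants gives $\Theta(S)^{|G|/|S|}$, as required; here $e\in S$ and Lagrange's theorem keep the setup consistent.

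For the backward implication, assume the identity $\Theta_{G}(S;[x_{s}]) = \hat{\varphi}(\Theta(H))^{|G|/|H|}$ holds; write $n=|G|$, $m=|S|=|H|$, $k=n/m$, and set $P=\hat{\varphi}(\Theta(H))$. The idea is to read off the multiplicative structure of $S$ from the coefficients of the monomials $x_{e}^{n-d}\cdot(\text{degree }d)$ for $d=2,3$. On the left, expanding the determinant, only permutations with exactly $d$ non-fixed points contribute to degree $d$; for $d=2$ these are transpositions and for $d=3$ they are $3$-cycles, and a cycle $(g_{1}\cdots g_{d})$ contributes the monomial $\prod_{i} x_{g_{i}g_{i+1}^{-1}}$ whose factors multiply cyclically to $e$. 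On the right, because the degree-$1$ coefficients of $P$ vanish, the coefficient of any degree-$d$ deviation with $d\le 3$ in $P^{k}$ is simply $k$ times the corresponding coefficient of $P$, which in turn is a coefficient of $\Theta(H)$ transported by $\hat{\varphi}$. Comparing the $d=2$ coefficients first would show that $S$ is closed under inversion and that $\varphi(h^{-1}) = \varphi(h)^{-1}$ for all $h\in H$. Comparing the $d=3$ coefficients of $x_{e}^{n-3}x_{s}x_{t}x_{w}$ then shows that the monomial $x_{\varphi(a)}x_{\varphi(b)}x_{\varphi((ab)^{-1})}$ must occur on the left, forcing $\varphi((ab)^{-1})\in\{(\varphi(a)\varphi(b))^{-1}, (\varphi(b)\varphi(a))^{-1}\}$ and hence, via the $d=2$ fact, $\varphi(ab)\in\{\varphi(a)\varphi(b), \varphi(b)\varphi(a)\}$ (products in $G$) for all $a,b\in H$.

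To finish, I would invoke the standard dichotomy lemma --- the same mechanism underlying the ``isomorphism or anti-isomorphism'' conclusion of Theorem~\ref{thm:1.3} --- which states that a bijection $\varphi$ into a group with $\varphi(ab)\in\{\varphi(a)\varphi(b),\varphi(b)\varphi(a)\}$ for all $a,b$ is globally a homomorphism or a global anti-homomorphism. In either case the image $S=\varphi(H)$ is a subgroup of $G$, completing the proof. The main obstacle is the backward direction's $d=3$ coefficient computation: one must count the $3$-cycles realizing each monomial on both sides and match them correctly, carefully handling the degenerate cases where $s,t,w$ coincide, where $\varphi(a)$ and $\varphi(b)$ commute in $G$, or where $ab$ is an involution, so that the containment $\varphi(ab)\in\{\varphi(a)\varphi(b),\varphi(b)\varphi(a)\}$ is obtained for every pair and not merely generically.
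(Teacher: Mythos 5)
Your forward direction is exactly the paper's Lemma~\ref{lem:3.2} (block-diagonalizing $M_{G}(S;[x_{s}])$ along the right cosets of $S$), so there is nothing to compare there. Your backward direction is correct but takes a genuinely different route from the paper's Lemma~\ref{lem:3.5}. Both arguments begin identically: the degree-$2$ coefficients give closure under inversion together with $\varphi(h^{-1})=\varphi(h)^{-1}$, and the degree-$3$ occurrence facts (Mansfield's lemmas, quoted as Lemma~\ref{lem:3.3} and Corollary~\ref{cor:3.4}) give $\varphi(ab)\in\{\varphi(a)\varphi(b),\varphi(b)\varphi(a)\}$ for all $a,b$. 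But note that this pointwise dichotomy only yields, for each pair $s,s'\in S$, that \emph{one} of $ss'$, $s's$ lies in $S$ --- not both --- so your appeal to the global rigidity lemma (a map satisfying the pointwise hom-or-antihom condition is globally a homomorphism or globally an anti-homomorphism) is carrying essential weight rather than tidying up; without it the argument does not close. Since that lemma is a genuine classical result (it is the engine behind Formanek--Sibley's Theorem~\ref{thm:1.3}), your proof is complete, and it even delivers the isomorphism/anti-isomorphism conclusion of Theorem~\ref{thm:4.4} as a by-product. The paper deliberately avoids that lemma here: in the one problematic case ($hh'=h'h$ with $h$, $h'$, $(hh')^{-1}$ pairwise distinct) it compares the \emph{coefficient} of $x_{e}^{n-3}x_{s}x_{s'}x_{\varphi((hh')^{-1})}$, which equals $2|G|$, against the $n$-versus-$2n$ dichotomy of (2) of Corollary~\ref{cor:3.4} to force $ss'=s's$ directly, and hence $(ss')^{-1}\in S$. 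In short: your route trades that coefficient count for the global dichotomy lemma, which makes the proof shorter if one may cite that lemma but proves more than is needed; the paper's route is more self-contained and cleanly separates the subgroup statement (Theorem~\ref{thm:1.1}) from the isomorphism statement, which it later obtains in Theorem~\ref{thm:4.4} by the entirely different mechanism of unit preservation in group algebras (Theorem~\ref{thm:4.3}).
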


The following theorem gives additional information on $S$ and $\varphi$ satisfying the condition of Theorem~$\ref{thm:1.1}$． 

\begin{thm}[see Theorem~$\ref{thm:4.4}$ for proof]\label{thm:1.2}
Let $G$ be a finite group, $e$ be the unit element of $G$, and $S$ be a subset of $G$ such that $e \in S$ and $|S|$ divides $|G|$. 
If there exist a group $H$ and a bijective map $\varphi : H \rightarrow S$
such that $\varphi(e') = e$ and 
$$
\Theta_{G}(S ; [x_{s}]) = \hat{\varphi} \left( \Theta(H)^{\frac{|G|}{|H|}} \right), 
$$
where $e'$ is the unit element of $H$, 
then $S$ is group isomorphic to $H$ and $\varphi$ is either a group isomorphism or a group anti-isomorphism. 
\end{thm}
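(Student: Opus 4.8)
The plan is to reduce the statement to the Formanek--Sibley theorem (Theorem~\ref{thm:1.3}), which already settles the analogous question for \emph{ordinary} group determinants. The hypothesis of the present theorem is precisely the right-hand condition of the equivalence in Theorem~\ref{thm:1.1}, so that theorem immediately yields that $S$ is a subgroup of $G$. The real work is then to pass from the \emph{generalized} group-determinant identity $\Theta_{G}(S;[x_{s}]) = \hat{\varphi}(\Theta(H)^{|G|/|H|})$ to an ordinary group-determinant identity relating $\Theta(S)$ and $\Theta(H)$, after which Theorem~\ref{thm:1.3} applies directly.

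First I would compute $\Theta_{G}(S;[x_{s}])$ under the knowledge that $S$ is a subgroup. Since $x_{gh^{-1}} = 0$ unless $gh^{-1} \in S$, and $gh^{-1} \in S$ is equivalent to $g,h$ lying in a common right coset $Sg = Sh$, the matrix $(x_{gh^{-1}})_{g,h \in G}$ becomes block diagonal after ordering $G$ by right cosets of $S$ (a determinant is unchanged under a simultaneous permutation of rows and columns). Writing $g = at$, $h = bt$ with $a,b \in S$ inside the coset $St$, one gets $gh^{-1} = ab^{-1}$, so the corresponding block is $(x_{ab^{-1}})_{a,b \in S}$, which is exactly the matrix defining $\Theta(S)$. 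As there are $|G|/|S|$ cosets, this gives
$$
\Theta_{G}(S;[x_{s}]) = \Theta(S)^{|G|/|S|}.
$$
Because $\varphi$ is a bijection we have $|S| = |H|$, so combining with the hypothesis and the fact that $\hat{\varphi}$ is an algebra homomorphism yields $\Theta(S)^{|G|/|H|} = \hat{\varphi}(\Theta(H))^{|G|/|H|}$ in the UFD $\mathbb{C}[x_{s} \mid s \in S]$.

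Next I would extract the $(|G|/|H|)$-th root. In a UFD, $A^{m} = B^{m}$ with $A,B \neq 0$ forces $A = \zeta B$ for some $m$-th root of unity $\zeta$, which here is a nonzero scalar since the units of $\mathbb{C}[x_{s}]$ are exactly the nonzero constants. To pin down $\zeta = 1$, I would compare the coefficient of the monomial $x_{e}^{|S|}$: in $\Theta(S) = \det(x_{ab^{-1}})_{a,b \in S}$ only the identity permutation contributes $x_{e}^{|S|}$, with coefficient $1$, and in $\hat{\varphi}(\Theta(H))$ the normalization $\varphi(e') = e$ forces the coefficient of $x_{e}^{|S|} = x_{e}^{|H|}$ to be $1$ as well. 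Hence $\zeta = 1$ and $\Theta(S) = \hat{\varphi}(\Theta(H))$.

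Finally, applying the inverse isomorphism $\hat{\varphi}^{-1} = \widehat{\varphi^{-1}}$ to this identity gives $\widehat{\varphi^{-1}}(\Theta(S)) = \Theta(H)$. Setting $\psi = \varphi^{-1} : S \to H$, we obtain a bijection of finite groups with $\psi(e) = e'$ satisfying $\hat{\psi}(\Theta(S)) = \Theta(H)$ over $\mathbb{C}$, whose characteristic $0$ does not divide $|S|$. Theorem~\ref{thm:1.3} then shows that $\psi$ is a group isomorphism or a group anti-isomorphism, and therefore so is its inverse $\varphi$; in either case $S$ is isomorphic to $H$. I expect the middle step to be the main obstacle: one must justify the factorization $\Theta_{G}(S;[x_{s}]) = \Theta(S)^{|G|/|S|}$ and, above all, the root extraction — namely that the ambient ring is a UFD and that the spurious root of unity $\zeta$ can be killed by inspecting an explicit coefficient. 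Once the identity is reduced to $\Theta(S) = \hat{\varphi}(\Theta(H))$, all the deep content is supplied by Theorem~\ref{thm:1.3}.
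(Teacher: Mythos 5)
Your proposal is correct, but it takes a genuinely different route from the paper's proof of Theorem~\ref{thm:4.4}. Both arguments begin identically: the hypothesis feeds into Lemma~\ref{lem:3.5} (equivalently, the backward direction of Theorem~\ref{thm:1.1}) to conclude that $S$ is a subgroup, and then Lemma~\ref{lem:3.2} gives $\Theta(S)^{|G|/|S|} = \hat{\varphi}\left(\Theta(H)\right)^{|G|/|H|}$ --- your block-diagonal computation is exactly the paper's proof of that lemma. The divergence is in how this power identity is exploited. You extract the $(|G|/|H|)$-th root as polynomials, using the factorization $A^{m}-B^{m}=\prod_{\zeta^{m}=1}(A-\zeta B)$ in the integral domain $\mathbb{C}[x_{s}]$ and killing the root of unity by noting that the coefficient of $x_{e}^{|S|}$ is $1$ on both sides (only the identity permutation contributes it); this yields $\Theta(S)=\hat{\varphi}(\Theta(H))$, and you then invoke the Formanek--Sibley theorem (Theorem~\ref{thm:1.3}) applied to $\varphi^{-1}$. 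The paper instead never extracts the root: it only observes that equal powers have equal vanishing loci when the variables are specialized to complex numbers, converts non-vanishing of the group determinant into invertibility in the group algebra via Lemma~\ref{lem:}, and concludes that $\widetilde{\varphi^{-1}}$ carries $\mathcal{U}(\mathbb{C}S)$ onto $\mathcal{U}(\mathbb{C}H)$, whence Theorem~\ref{thm:4.3} (the unit-preservation criterion of \cite{Andy}) applies. Your route is logically sound --- Theorem~\ref{thm:1.3} is an external cited result, so there is no circularity --- and the root-extraction step is handled carefully; what the paper's route buys is independence from Formanek--Sibley, which matters because the paper presents Theorem~\ref{thm:4.4} as \emph{containing} the $K=\mathbb{C}$ case of that theorem (Theorem~\ref{thm:4.5}), a framing that would lose its force if the proof consumed the very theorem it claims to generalize.
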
 

Theorem~$\ref{thm:1.2}$ contains the case where $K = \mathbb{C}$ of Theorem~$\ref{thm:1.3}$. 

The remainder of this paper is organized as follows. 
In Section~$2$, 
we define and provide examples of the generalized group determinant. 
We conclude that the generalized group determinant is a generalization of the group determinant. 
In Section~$3$, 
using the generalized group determinant, 
we give a necessary and sufficient condition for a subset of a finite group to be a subgroup, 
which is main result of this paper. 
Finally, 
we give additional information on subsets satisfying the necessary and sufficient condition in Section~$4$.

\section{Generalized group determinant}
In this section, we generalize the group determinant and give examples of this generalized group determinant. 
In the next section, we use the generalized group determinant to provide a necessary and sufficient condition for a subset of a finite group to be a subgroup. 

Let $G$ be a finite group, $S$ be a subset of $G$, 
$x_{s}$ with $s \in S$ be independent commuting variables, and $x_{g} = 0$ for $g \in G \setminus S$. 
In addition, let $\Mat(m, A)$ be the set of all $m \times m$ matrices with elements in a set $A$ and $\left| B \right|$ be the cardinality of a set $B$.  
We define a matrix $M_{G}(S; [x_{s}])$ as 
\begin{align*}
M_{G}(S; [x_{s}]) := \left( x_{g h^{-1}} \right)_{g, h \in G} \in \Mat(|G|, [x_{s}]), 
\end{align*}
where  $[x_{s}] = \{ x_{s} \mid s \in S \}$. 

Let $K$ be a field and $K [x_{s}]$ be the polynomial ring in $[x_{s}]$ over $K$. 
\begin{definition}\label{def:2.1}
We define a generalized group determinant of $G$ to $S$ in $[x_{s}]$ as 
$$
\Theta_{G}(S ; [x_{s}]) := \det{M_{G}(S; [x_{s}])} \in K [x_{s}]. 
$$
For conciseness, when $S = G$, we write $\Theta_{G}(S; [x_{s}])$ as $\Theta(G; [x_{s}])$ or simply $\Theta(G)$. 
\end{definition}

The polynomial $\Theta(G)$ is called the group determinant of $G$ (see, e.g.,~\cite[p.~366]{conrad1998origin}, \cite[p.~38]{Frobenius1968gruppen}, \cite[p.~142]{Hawkins1971}, \cite[p.~299]{johnson1991}, \cite[p.~224]{van2013history}, \cite[p.~7]{Yamaguchi2017}). 
It follows from the above definition that $\Theta_{G}(S ; [x_{s}])$ is a homogeneous polynomial of degree $|G|$ in $[x_{s}]$.  

In general, the matrix $M_{G}(S; [x_{s}])$ is covariant under a change of numbering to the elements of $G$. 
However, the generalized group determinant $\Theta_{G}(S ; [x_{s}])$ is invariant. 
We illustrate the generalized group determinant in the following example. 

\begin{rei}\label{rei:2.2}
Let $G = \mathbb{Z} / 3 \mathbb{Z} = \left\{ \overline{1}, \overline{2}, \overline{3} \right\} $. 
For conciseness, we write $x_{\overline{i}}$ as $x_{i}$ for any $i \in \{ 1, 2, 3 \}$. 
If $S = G$, 
we have 
\begin{align*}
\Theta(G) = \det{
\begin{pmatrix}
x_{3} & x_{2} & x_{1} \\ 
x_{1} & x_{3} & x_{2} \\ 
x_{2} & x_{1} & x_{3}
\end{pmatrix}
} 
= x_{1}^{3} + x_{2}^{3} + x_{3}^{3} -  3 x_{1} x_{2} x_{3}. 
\end{align*}
If $S = \left\{ \overline{3} \right\}$, we have 
\begin{align*}
\Theta_{G}(S ; [x_{s}]) = \det{
\begin{pmatrix}
x_{3} & 0 & 0 \\ 
0 & x_{3} & 0 \\ 
0 & 0 & x_{3}
\end{pmatrix}
} 
= x_{3}^{3}. 
\end{align*}
\end{rei} 

When $S = G$, the matrix $M_{G}(S; [x_{s}])$ is called the group matrix of $G$ (see, e.g.,~\cite[p.~366]{conrad1998origin}, \cite[p.~649]{Formanek_1991}, \cite[p.~276]{Frobenius1968gruppen2}, \cite[p.~4]{Andy}, \cite[p.~299]{johnson1991}). 
Below, we assume that $K = \mathbb{C}$. 
The group matrix is a matrix form of an element $\alpha := \sum_{g \in G} x_{g} g$ in the group algebra $\mathbb{C} G$, 
where we assume that $x_{g}$ is a complex number for any $g \in G$. 
That is, the map $L : \mathbb{C}G \rightarrow \Mat(|G|, \mathbb{C})$ given by $\alpha \mapsto M_{G}(G; [x_{g}])$ is a ring homomorphism. 
In particular, $L$ is the regular representation of $G$ (see, e.g.,~\cite[p.~143, p.~161]{Hawkins1971}, \cite[p.~16]{Yamaguchi2018transfer}).

\section{Necessary and sufficient condition for a subset of a finite group to be a subgroup}

In this section, using the generalized group determinant, we give a necessary and sufficient condition for a subset of a finite group to be a subgroup. 

The following theorem gives the necessary and sufficient condition. 
Note that if $\varphi : T \rightarrow S$ is a bijective map, then $\varphi$ induces the $\mathbb{C}$-algebra isomorphism
$$
\hat{\varphi} : \mathbb{C} \left\{ x_{t} \mid t \in T \right\} \ni x_{t} \mapsto x_{\varphi(t)} \in \mathbb{C} \left\{ x_{s} \mid s \in S \right\}, 
$$
where $x_{t}$ and $x_{s}$ are independent commuting variables. 

\begin{thm}[Theorem~\ref{thm:1.1}]\label{thm:3.1}
Let $G$ be a finite group, $e$ be the unit element of $G$, and $S$ be a subset of $G$ such that $e \in S$ and $|S|$ divides $|G|$. 
Then, $S$ is a group if and only if there exist a group $H$ and a bijective map $\varphi : H \to S$ 
such that $\varphi(e') = e$ and 
$$
\Theta_{G}(S ; [x_{s}]) = \hat{\varphi} \left( \Theta(H)^{\frac{|G|}{|H|}} \right) = \Theta \left( H; \left\{ x_{\varphi(h)} \mid h \in H\right\} \right)^{\frac{|G|}{|H|}}, 
$$
where $e'$ is the unit element of $H$. 
\end{thm}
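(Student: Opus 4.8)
The plan is to prove the two implications separately, working throughout directly with the matrix $M_G(S;[x_s])=(x_{gh^{-1}})_{g,h\in G}$.

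For the forward implication, assume $S$ is a subgroup and set $m=|G|/|S|$. Order the elements of $G$ so that the right cosets $Sa_1,\dots,Sa_m$ occupy consecutive blocks. For $g\in Sa_i$ and $h\in Sa_j$ one checks that $gh^{-1}\in S$ holds exactly when $i=j$, so in this ordering $M_G(S;[x_s])$ is block diagonal with $m$ blocks; writing $g=sa_i$ and $h=s'a_i$ gives $gh^{-1}=ss'^{-1}$, so each block is the group matrix $(x_{ss'^{-1}})_{s,s'\in S}$ of $S$. Since reordering rows and columns by the same permutation leaves the determinant unchanged, this yields $\Theta_G(S;[x_s])=\Theta(S)^{m}$, and choosing $H=S$ and $\varphi=\mathrm{id}_S$ gives the asserted identity.

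For the converse the plan is to extract a spectral consequence from the polynomial identity via the substitution $x_e\mapsto 1$ and $x_s\mapsto\epsilon$ for all $s\in S\setminus\{e\}$. Under it the left-hand side becomes $\det(I+\epsilon A)$, where $A$ is the $0/1$ matrix whose $(g,h)$-entry is $1$ precisely when $gh^{-1}\in S\setminus\{e\}$, while $\hat\varphi(\Theta(H))$ becomes $\det((1-\epsilon)I+\epsilon J)$ with $J$ the all-ones matrix of size $|H|=|S|$. Hence the right-hand side equals $(1+(|S|-1)\epsilon)^{m}(1-\epsilon)^{m(|S|-1)}$, and comparing the two polynomials in $\epsilon$ forces the eigenvalues of $B:=A+I$ (the $0/1$ matrix with $(g,h)$-entry $1$ iff $gh^{-1}\in S$) to be $|S|$ with multiplicity $m$ and $0$ with multiplicity $|G|-m$.

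It then remains to convert this spectral data into closure of $S$, which I expect to be the crux. Evaluating $\operatorname{tr}(B^2)$ in two ways — from the eigenvalues it is $m|S|^2=|S|\,|G|$, and directly it equals $|S\cap S^{-1}|\,|G|$ — gives $S=S^{-1}$, so $B$ is symmetric and therefore diagonalizable with $B^2=|S|\,B$. Reading off the $(g,h)$-entry of $B^2=|S|\,B$ shows that the number of factorizations $gh^{-1}=s_1s_2$ with $s_1,s_2\in S$ equals $|S|$ when $gh^{-1}\in S$ and $0$ otherwise; as $gh^{-1}$ ranges over all of $G$, the vanishing case says exactly that $SS\subseteq S$. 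Because $e\in S$ and $G$ is finite, this closure makes $S$ a subgroup, completing the converse. The main difficulty is this direction: the leverage comes from recognizing that a single specialization collapses the full identity to the two-eigenvalue condition on $B$, and that $B^2=|S|\,B$ then encodes multiplicative closure; care is needed to justify the exact spectrum, in particular that no eigenvalue is lost and that symmetry — not mere eigenvalue data — is what licenses the identity $B^2=|S|\,B$.
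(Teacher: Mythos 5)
Your proof is correct, but the converse direction takes a genuinely different route from the paper's. The paper proves sufficiency (its Lemma~3.5) by combinatorial analysis of which monomials occur in the group determinant and with what coefficients, relying on Mansfield's lemmas (the paper's Lemma~3.3, including a corrected coefficient count in part~(4)) and a four-case analysis according to whether $h$ and $h'$ commute; the occurrence of monomials such as $x_{e}^{|G|-3}x_{s}x_{s'}x_{\varphi((hh')^{-1})}$ together with the fact that indices of an occurring monomial must multiply to $e$ forces $s^{-1}\in S$ and $ss'\in S$. You instead collapse the entire polynomial identity to the single specialization $x_{e}\mapsto 1$, $x_{s}\mapsto\epsilon$, which turns both sides into characteristic-polynomial data for the Cayley-type $0/1$ matrix $B=(\,[gh^{-1}\in S]\,)_{g,h}$; matching linear factors pins the spectrum of $B$ to $\{|S|^{(m)},0^{(|G|-m)}\}$, the trace of $B^{2}$ computed two ways yields $S=S^{-1}$ and hence symmetry and diagonalizability of $B$, and $B^{2}=|S|\,B$ read entrywise gives $SS\subseteq S$. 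All the steps check out: $\operatorname{tr}(B^{2})=\sum\lambda_i^{2}$ holds for arbitrary complex matrices via triangularization, so it is legitimate to use it before knowing $B$ is diagonalizable, and you correctly flag that the identity $B^{2}=|S|\,B$ itself needs the symmetry, not just the eigenvalues. Your approach buys self-containedness (no appeal to Mansfield's monomial lemmas, whose proof the paper even has to patch) and in fact establishes something slightly stronger, since only the one-parameter specialization of the hypothesized identity is used and the group structure of $H$ enters only through the bijection fixing the identity; what it gives up is the finer monomial-level information (coefficients $n$ versus $2n$ detecting commutativity) that the paper's method exposes and which is in the spirit of the Formanek--Sibley analysis underlying Section~4. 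The forward implication is the same block-diagonalization by right cosets as the paper's Lemma~3.2.
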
 

First, we give a necessary condition for a subset of a finite group to be a subgroup.

\begin{lem}\label{lem:3.2}
If $S$ is a subgroup of $G$, then 
$$
\Theta_{G}(S ; [x_{s}]) = \Theta(S)^{\frac{|G|}{|S|}}. 
$$
\end{lem}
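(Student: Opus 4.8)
The plan is to use the right-coset decomposition of $G$ by $S$ to put the matrix $M_{G}(S; [x_{s}])$ into block-diagonal form, with each block equal to the group matrix of $S$. The starting observation is the vanishing pattern of the entries: $x_{gh^{-1}} \neq 0$ forces $gh^{-1} \in S$, and since $S$ is a subgroup this is equivalent to $Sg = Sh$, i.e.\ to $g$ and $h$ lying in the same right coset of $S$. Thus the support of $M_{G}(S; [x_{s}])$ respects the partition of $G$ into right cosets of $S$.

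Next I would fix representatives. Write $S = \{ s_{1}, \ldots, s_{n} \}$ with $n = |S|$ and $s_{1} = e$, and let $a_{1}, \ldots, a_{m}$ with $m = |G|/|S|$ be right-coset representatives, so that $G = \bigcup_{k=1}^{m} S a_{k}$ is a disjoint union. Ordering the elements of $G$ coset by coset, as $s_{1} a_{1}, \ldots, s_{n} a_{1}, s_{1} a_{2}, \ldots, s_{n} a_{m}$, and using the \emph{same} listing of $S$ inside each coset, I may compute the determinant in this ordering, because $\Theta_{G}(S; [x_{s}])$ is invariant under a renumbering of the elements of $G$ (as noted after Definition~\ref{def:2.1}). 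By the vanishing pattern, every entry linking two distinct cosets is zero, so in this ordering $M_{G}(S; [x_{s}])$ is block diagonal with $m$ blocks, one per coset.

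Then I would identify each block. For $g = s_{i} a_{k}$ and $h = s_{j} a_{k}$ in the $k$-th coset, $g h^{-1} = s_{i} a_{k} a_{k}^{-1} s_{j}^{-1} = s_{i} s_{j}^{-1} \in S$, so the $(i,j)$-entry of the $k$-th block is $x_{s_{i} s_{j}^{-1}}$, independent of the representative $a_{k}$. Hence every diagonal block equals the group matrix $\left( x_{s_{i} s_{j}^{-1}} \right)_{1 \le i, j \le n}$ of $S$, whose determinant is $\Theta(S)$ by Definition~\ref{def:2.1}. Since the determinant of a block-diagonal matrix is the product of the determinants of its blocks, and there are $m = |G|/|S|$ identical blocks, this gives $\Theta_{G}(S; [x_{s}]) = \Theta(S)^{|G|/|S|}$.

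The computation is essentially routine once the ordering is set up; the point needing care — and the step I would regard as the crux — is verifying that the coset-by-coset ordering simultaneously makes the matrix block diagonal and makes each diagonal block coincide exactly with the group matrix of $S$, independently of the chosen representative $a_{k}$. Everything else follows from multiplicativity of the determinant over blocks together with the stated invariance of $\Theta_{G}(S; [x_{s}])$ under renumbering of $G$.
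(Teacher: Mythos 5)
Your proposal is correct and follows essentially the same route as the paper: order the elements of $G$ right-coset by right-coset so that $M_{G}(S;[x_{s}])$ becomes block diagonal with each block equal to the group matrix of $S$, then take determinants. You simply spell out the two verifications (the vanishing pattern across distinct cosets and the identity $g h^{-1} = s_{i} s_{j}^{-1}$ within a coset) that the paper leaves as an ``observe.''
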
 
\begin{proof}
Let $l = |G|/|S|$ and $S \backslash G = \{ Sg'_{1}, Sg'_{2}, \ldots, Sg'_{l} \}$ be a right coset of $S$ in $G$. 
We put $g_{i} = s_{p} g'_{q} \in G$, where $i = |S| (q - 1) + p$ for $1 \leq p \leq |S|$ and $1 \leq q \leq l$. 
With respect to this ordering of the elements of $G$, 
observe that $M_{G}(S; [x_{s}]) = \diag(M_{S}(S; [x_{s}]), M_{S}(S; [x_{s}]), \ldots, M_{S}(S; [x_{s}]))$. 
So, we have 
\begin{align*}
\Theta_{G}(S ; [x_{s}]) = \det \left\{ \diag(M_{S}(S; [x_{s}]), M_{S}(S; [x_{s}]), \ldots, M_{S}(S; [x_{s}])) \right\} = \Theta(S)^{\frac{|G|}{|S|}}. 
\end{align*}
\end{proof}

Let $e$ be the unit element of $G$. 
To give a sufficient condition for a subset of a finite group to be a subgroup, 
we use the following lemma, 
which is known in terms of the monomials of group determinants. 

\begin{lem}[{\cite[Lemmas~1--3]{Richard}}]\label{lem:3.3}
Let $n = |G|$. 
The following hold: 
\begin{enumerate}
\item If the monomial $x_{a_{1}} x_{a_{2}} \cdots x_{a_{n}}$ occurs in $\Theta(G)$, the $a_{i}$ can be ordered such that their product is $e$; 
\item If $a b = e$, the monomial $x_{e}^{n-2} x_{a} x_{b}$ occurs in $\Theta(G)$; 
\item If $a b c = e$, the monomial $x_{e}^{n-3} x_{a} x_{b} x_{c}$ occurs in $\Theta(G)$; 
\item If none of $a$, $b$, $c$ is $e$ and the monomial $x_{e}^{n-3} x_{a} x_{b} x_{c}$ occurs in $\Theta(G)$, 
the coefficient of the monomial is {\rm (i)} $n / 3$ if $a = b = c$; {\rm (ii)} $n$ if two of $a$, $b$, $c$ are equal; 
{\rm (iii)} $n$ if no two of them are equal and $a b \neq b a$; {\rm (iv)} $2n$ if no two of them are equal and $a b = b a$. 
$($Note that if $a b c = e$, then $a b = b a$ if and only if $a$, $b$ and $c$ are commutative.$)$
\end{enumerate}
\end{lem}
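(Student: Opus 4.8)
The plan is to work directly from the Leibniz expansion of the determinant. Writing $\mathrm{Sym}(G)$ for the group of permutations of the underlying set of $G$, one has
$$
\Theta(G) = \sum_{\sigma \in \mathrm{Sym}(G)} \sgn(\sigma) \prod_{g \in G} x_{g \sigma(g)^{-1}},
$$
so each $\sigma$ contributes the single monomial $\prod_{g} x_{g\sigma(g)^{-1}}$ with coefficient $\sgn(\sigma)$, and the whole task reduces to bookkeeping over permutations. The first observation I would record is that the exponent of $x_e$ in the monomial of $\sigma$ equals the number of fixed points of $\sigma$, since $g\sigma(g)^{-1} = e$ exactly when $\sigma(g) = g$. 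Hence a monomial that is $x_e^{n-k}$ times a degree-$k$ monomial in the remaining (non-$x_e$) variables can only come from a permutation with exactly $n-k$ fixed points, that is, one whose non-fixed points form a derangement of size $k$.

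For part (1) I would decompose $\sigma$ into disjoint cycles. Along a cycle $(g_1\,g_2\,\cdots\,g_r)$ the corresponding factors are $x_{g_1 g_2^{-1}}, x_{g_2 g_3^{-1}}, \dots, x_{g_r g_1^{-1}}$, and the product of their indices in this cyclic order telescopes: $(g_1 g_2^{-1})(g_2 g_3^{-1})\cdots(g_r g_1^{-1}) = e$. Concatenating the cycles, the indices of all $n$ factors can therefore be ordered so that their product is $e$, which is the assertion. Parts (2) and (3) are existence statements: by the fixed-point remark, $x_e^{n-2}x_ax_b$ with $ab=e$ can only come from a single transposition, and the transposition exchanging $a$ and $e$ already produces factors $x_a$ and $x_{a^{-1}} = x_b$; similarly $x_e^{n-3}x_ax_bx_c$ with $abc=e$ comes from a $3$-cycle, and $(a\,e\,b^{-1})$ produces factors $x_a, x_b, x_c$. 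Since all transpositions realizing a fixed pair $\{a,a^{-1}\}$ share the sign $-1$ (and all relevant $3$-cycles share the sign $+1$), there is no cancellation, so these monomials genuinely occur.

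The substance is part (4), the exact coefficient of $x_e^{n-3}x_ax_bx_c$ when none of $a,b,c$ is $e$. By the fixed-point remark this monomial is produced only by $3$-cycles, each of sign $+1$, so the coefficient equals the number of $3$-cycles whose three factors form the multiset $\{a,b,c\}$. To count these I would set up the correspondence sending a $3$-cycle $(g_1\,g_2\,g_3)$ to the pair consisting of a base point $g_1$ and the ordered factor triple $(g_1 g_2^{-1}, g_2 g_3^{-1}, g_3 g_1^{-1})$; conversely an ordered triple $(\alpha,\beta,\gamma)$ with $\alpha\beta\gamma = e$ together with a choice of $g_1 \in G$ recovers the cycle, and since none of $\alpha,\beta,\gamma$ is $e$ the three points are automatically distinct. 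This correspondence is $3$-to-$1$ (cyclic rotation of the base point), so the count equals $\tfrac{n}{3}$ times the number $F$ of orderings of $\{a,b,c\}$ whose product is $e$. The four cases then follow by evaluating $F$: $F=1$ when $a=b=c$; $F=3$ when exactly two coincide (the three elements then lie in a cyclic group and commute, so all three orderings multiply to $e$); $F=3$ when $a,b,c$ are distinct and $ab \neq ba$ (only the cyclic class of $(a,b,c)$ survives); and $F=6$ when they are distinct and $ab=ba$ (then $a,b,c$ commute pairwise, so every ordering multiplies to $e$). Multiplying by $\tfrac{n}{3}$ yields the stated coefficients $n/3,\,n,\,n,\,2n$.

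The main obstacle I anticipate is precisely this last counting step: one must argue cleanly that the factor-triple-plus-base-point correspondence is exactly $3$-to-$1$, handle the multiplicities in the multiset $\{a,b,c\}$ without overcounting, and pin down the commutativity dichotomy in cases (iii) and (iv) — for which the recorded equivalence ``$ab=ba$ iff $a,b,c$ commute when $abc=e$'' is the key input. The fixed-point bookkeeping that confines the contributions to transpositions and $3$-cycles (and thereby rules out cancellation) is routine but essential, and it is exactly where the hypothesis that none of $a,b,c$ equals $e$ is used.
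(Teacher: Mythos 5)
Your proof is correct, and for the substantive part (4) it is essentially the paper's argument: both reduce the coefficient of $x_{e}^{n-3}x_{a}x_{b}x_{c}$ to a count of $3$-cycles in the Leibniz expansion (a contributing permutation must fix exactly $n-3$ points, so its support is a $3$-cycle of sign $+1$ and no cancellation can occur), and both count those cycles by anchoring a base point together with an ordered factor triple of product $e$. Your uniform packaging --- coefficient $=\tfrac{n}{3}F$ with $F$ the number of orderings of the multiset $\{a,b,c\}$ having product $e$, evaluated as $1,3,3,6$ in the four cases --- is a cleaner version of what the paper does case by case. The one genuine divergence is case (i), $a=b=c$: the paper sidesteps the count there by noting that $a^{3}=e$ makes $\{e,a,a^{2}\}$ a subgroup, invoking Lemma~\ref{lem:3.2} to get $\bigl(x_{e}^{3}+x_{a}^{3}+x_{a^{2}}^{3}-3x_{e}x_{a}x_{a^{2}}\bigr)^{n/3}$, and reading off the coefficient $n/3$; your direct count ($F=1$, with the base-point correspondence still exactly $3$-to-$1$ because $g$, $a^{-1}g$, $a^{-2}g$ are distinct) reaches the same number without routing through the generalized group determinant, which is arguably more self-contained. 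Note also that the paper proves only part (4) and cites (1)--(3) from Mansfield; your telescoping argument for (1) and the explicit transposition and $3$-cycle witnesses for (2) and (3) are correct, provided you flag the degenerate cases where one of the elements equals $e$ (these reduce to (2) or to the monomial $x_{e}^{n}$).
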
 
Here, we say that a monomial occurs in a polynomial if the monomial is not canceled after combining like terms. 
There is a mistake in the last sentence of the proof of \cite[Lemma~3]{Richard}. 
It says that the coefficient of $x_{e}^{n-3} x_{a} x_{b} x_{c}$ is $n$ or $2n$, but the coefficient is $n / 3$ when $a = b = c$. 
The following is a proof of (4) of Lemma~$\ref{lem:3.3}$. 
\begin{proof}
If $a = b = c$, $a^{3} = e$ from (1) of Lemma~$\ref{lem:3.3}$. 
This implies that $S = \left\{e, a, a^{2} \right\}$ is a subgroup of $G$. 
Therefore, from Lemma~$\ref{lem:3.2}$, we have 
$$
\Theta_{G}(S ; [x_{s}]) = \Theta(S)^{\frac{|G|}{|S|}} = \left( x_{e}^{3} + x_{a}^{3} + x_{a^{2}}^{3} - 3 x_{e} x_{a} x_{a^{2}}\right)^{\frac{n}{3}}. 
$$
From this, case (i) is proved 
since the coefficients of $x_{e}^{n-3} x_{a}^{3}$ in $\Theta(G)$ and $\Theta_{G}(S ; [x_{s}])$ are equal. 
For any $g_{i} \in G$ with $i = 1, 2, \ldots, n$, there is only one pair $(g_{j}, g_{k})$ of elements of $G$ such that 
$(g_{i} g_{j}^{-1}, g_{j} g_{k}^{-1}, g_{k} g_{i}^{-1}) = (a, b, c)$. 
If $a$, $b$, and $c$ have the relation as in cases (ii) or (iii), any permutation $\sigma \in S_{n}$ giving the desired monomial 
(i.e., satisfying $\prod_{l = 1}^{n} x_{g_{l} g_{\sigma(l)}^{-1}} = x_{e}^{n-3} x_{a} x_{b} x_{c}$) 
must be $\sigma = (i\, j\, k)$ for some $i \neq j, k$. 
This proves cases (ii) and (iii). 
If no two of $a$, $b$, and $c$ are equal and $a b = b a$, any permutation $\sigma \in S_{n}$ giving the desired monomial  
must be $\sigma = (i\, j\, k)$ or $(i\, j\, k')$, where $g_{k'} = g_{i} g_{k}^{-1} g_{j}$. 
This proves case (iv).
\end{proof}

From (1) and (4) of Lemma~$\ref{lem:3.3}$, we obtain the following corollary. 

\begin{cor}\label{cor:3.4}
Let $n = |G|$. 
The following hold: 
\begin{enumerate}
\item If the monomial $x_{a_{1}} x_{a_{2}} \cdots x_{a_{n}}$ occurs in $\Theta_{G}(S ; [x_{s}])$, 
the $a_{i}$ can be ordered such that their product is $e$; 
\item If none of $a$, $b$, $c$ is $e$ and the monomial $x_{e}^{n-3} x_{a} x_{b} x_{c}$ occurs in $\Theta_{G}(S ; [x_{s}])$, 
the coefficient of the monomial is {\rm (i)} $n / 3$ if $a = b = c$; {\rm (ii)} $n$ if two of $a$, $b$, $c$ are equal; 
{\rm (iii)} $n$ if no two of them are equal and $a b \neq b a$; {\rm (iv)} $2n$ if no two of them are equal and $a b = b a$. 
\end{enumerate}
\end{cor}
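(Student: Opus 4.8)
The plan is to observe that $\Theta_{G}(S ; [x_{s}])$ is nothing but the ordinary group determinant $\Theta(G)$ after the variables indexed by $G \setminus S$ have been set to zero, and then to transport the monomial information of Lemma~\ref{lem:3.3} across this specialization. First I would record that the matrix $M_{G}(S; [x_{s}]) = (x_{gh^{-1}})_{g,h \in G}$ is exactly the full group matrix $M_{G}(G; [x_{g}])$ subject to the substitution $x_{g} = 0$ for every $g \in G \setminus S$; since the determinant is a polynomial in the matrix entries, this yields
$$
\Theta_{G}(S ; [x_{s}]) = \Theta(G)\big|_{x_{g} = 0 \ (g \in G \setminus S)}.
$$

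The key point is then that specializing a subset of the variables to zero never creates a new monomial and never alters the coefficient of a surviving one: a monomial of $\Theta(G)$ survives precisely when it involves only variables $x_{s}$ with $s \in S$, and in that case it appears in $\Theta_{G}(S ; [x_{s}])$ with its original coefficient. Consequently, every monomial that occurs in $\Theta_{G}(S ; [x_{s}])$ also occurs in $\Theta(G)$ with the same coefficient. This correspondence is the only substantive content of the argument, and it is a standard fact about polynomial substitution.

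With this in hand, both assertions follow directly. For (1), any monomial $x_{a_{1}} \cdots x_{a_{n}}$ occurring in $\Theta_{G}(S ; [x_{s}])$ occurs in $\Theta(G)$, so Lemma~\ref{lem:3.3}(1) lets us reorder the $a_{i}$ so that their product is $e$. For (2), the hypothesis that $x_{e}^{n-3} x_{a} x_{b} x_{c}$ occurs in $\Theta_{G}(S ; [x_{s}])$ already forces $e, a, b, c \in S$, and by the correspondence the same monomial occurs in $\Theta(G)$ with the same coefficient; the four cases are then read off verbatim from Lemma~\ref{lem:3.3}(4). I expect no real obstacle here, since the corollary is a direct specialization of Lemma~\ref{lem:3.3}; the only step meriting care is the clean statement that zero-specialization of variables preserves the coefficients of the surviving monomials.
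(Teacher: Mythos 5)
Your proposal is correct and matches the paper's (implicit) argument: the paper gives no separate proof, merely noting that the corollary follows from (1) and (4) of Lemma~\ref{lem:3.3}, and the intended justification is exactly your observation that $\Theta_{G}(S;[x_{s}])$ is $\Theta(G)$ with the variables $x_{g}$, $g \in G \setminus S$, set to zero, so surviving monomials keep their coefficients. Your write-up simply makes this specialization step explicit.
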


We now give a sufficient condition for a subset of a finite group to be a subgroup. 
\begin{lem}\label{lem:3.5}
Let $S$ be a subset of $G$ such that $e \in S$ and $|S|$ divides $|G|$. 
If there exist a group $H$ and a bijection $\varphi : H \rightarrow S$ such that $\varphi(e') = e$ and 
$$
\Theta_{G}(S ; [x_{s}]) = \hat{\varphi} \left( \Theta(H)^{\frac{|G|}{|H|}} \right) = \Theta \left( H; \left\{ x_{\varphi(h)} \mid h \in H\right\} \right)^{\frac{|G|}{|H|}}, 
$$
where $e'$ is the unit element of $H$, then $S$ is a group. 
\end{lem}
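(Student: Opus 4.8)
The plan is to prove that $S$ is a subgroup by showing it is closed under multiplication; since $e \in S$ and $G$ is finite, closure then forces $S$ to be a subgroup. Throughout I write $n = |G|$, $m = |H| = |S|$ and $l = n/m$, so the hypothesis reads $\Theta_{G}(S ; [x_{s}]) = P^{l}$, where $P := \Theta(H; \{x_{\varphi(h)} \mid h \in H\})$ is homogeneous of degree $m$ in the same variables $\{x_{s} \mid s \in S\}$ that occur on the left. The left-hand side is exactly $\Theta(G)$ after setting $x_{g}=0$ for $g \notin S$, so Corollary~\ref{cor:3.4} governs which monomials occur there and with which coefficients. First I would record the low-order structure of $P$: calling the number of non-$x_{e}$ factors of a monomial its \emph{weight}, Lemma~\ref{lem:3.3}(1) applied to $H$ shows $P$ has no weight-$1$ part (a single non-$e'$ index cannot be ordered to $e'$), so $P = x_{e}^{m} + Q_{2} + Q_{3} + \cdots$ with $Q_{w}$ the weight-$w$ part. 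Because the weight-$1$ part vanishes, the weight-$2$ and weight-$3$ parts of $P^{l}$ are simply $l\,x_{e}^{n-m}Q_{2}$ and $l\,x_{e}^{n-m}Q_{3}$, with no cross terms; and since $\varphi$ is a bijection, distinct $H$-monomials map to distinct $S$-monomials, so the coefficient of $x_{e}^{n-3}x_{a}x_{b}x_{c}$ on the right is exactly $l$ times the coefficient of its $\varphi$-preimage monomial in $\Theta(H)$, with no collisions.

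Next I would compare weight-$2$ parts to prove that $S$ is closed under inverses and that $\varphi(h^{-1}) = \varphi(h)^{-1}$. By Lemma~\ref{lem:3.3}(2) the monomial $x_{e}^{m-2}x_{\varphi(h)}x_{\varphi(h^{-1})}$ occurs in $P$ for every $h \neq e'$, hence $x_{e}^{n-2}x_{\varphi(h)}x_{\varphi(h^{-1})}$ occurs in $P^{l}=\Theta_{G}(S ; [x_{s}])$ with nonzero coefficient. By Corollary~\ref{cor:3.4}(1) its indices can be ordered to multiply to $e$, which for a weight-$2$ monomial means $\varphi(h)\varphi(h^{-1}) = e$; thus $\varphi(h)^{-1} = \varphi(h^{-1}) \in S$, giving closure under inverses and the relation $\varphi(h^{-1})=\varphi(h)^{-1}$.

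Finally I would compare weight-$3$ parts to obtain closure under products. Fix $a=\varphi(\alpha)$ and $b=\varphi(\beta)$ in $S$ and set $d=\varphi(\alpha\beta)$, $d'=\varphi(\beta\alpha)$; the goal is $ab\in S$. Using the inverse relation just proved, the $H$-triples $(\alpha,\beta,(\alpha\beta)^{-1})$ and $(\beta,\alpha,(\beta\alpha)^{-1})$, each with product $e'$, produce via Lemma~\ref{lem:3.3}(3) the monomials $x_{e}^{n-3}x_{a}x_{b}x_{d^{-1}}$ and $x_{e}^{n-3}x_{b}x_{a}x_{d'^{-1}}$ in $\Theta_{G}(S ; [x_{s}])$; by Corollary~\ref{cor:3.4}(1) each index triple is orderable to $e$, which forces $d\in\{ab,ba\}$ and $d'\in\{ab,ba\}$. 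If $\alpha\beta\neq\beta\alpha$, then $d\neq d'$, so $\{d,d'\}=\{ab,ba\}$ and both $ab$ and $ba$ lie in $S$. If $\alpha\beta=\beta\alpha$, I would instead invoke coefficients: by Lemma~\ref{lem:3.3}(4)(iv) the coefficient of the $H$-monomial is $2m$, so the coefficient of $x_{e}^{n-3}x_{a}x_{b}x_{d^{-1}}$ on both sides equals $l\cdot 2m = 2n$, whence Corollary~\ref{cor:3.4}(2)(iv) forces the three indices, in particular $a$ and $b$, to commute in $G$; then $ab=ba=d\in S$. In either case $ab\in S$, so $S\cdot S\subseteq S$ and $S$ is a group. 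The cases $a=e$, $b=e$ or $ab=e$ are immediate, and the degenerate triples where two of $a,b,(ab)^{-1}$ coincide are handled the same way using cases (i)--(ii) of Corollary~\ref{cor:3.4}(2) in place of (iv).

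I expect the main obstacle to be the weight-$3$ bookkeeping: verifying that the weight-$3$ part of $P^{l}$ is precisely $l\,x_{e}^{n-m}Q_{3}$ with no cancellation, so that the values $n/3,\,n,\,2n$ of Corollary~\ref{cor:3.4}(2) transfer faithfully across the two sides, and then extracting commutativity in $G$ from commutativity in $H$ through the $n$-versus-$2n$ distinction. Note that this argument yields only closure; the sharper conclusion that $\varphi$ is an isomorphism or anti-isomorphism is deferred, since once $S$ is known to be a group Lemma~\ref{lem:3.2} gives $\Theta(S)^{l}=P^{l}$, hence $\Theta(S)=P$ by unique factorization, placing us in the situation of Theorem~\ref{thm:1.3}.
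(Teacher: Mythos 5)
Your proposal is correct and follows essentially the same route as the paper's proof: transfer the weight-$2$ and weight-$3$ monomials guaranteed by Lemma~\ref{lem:3.3}(2)--(3) through $\hat{\varphi}$ into $\Theta_{G}(S;[x_{s}])$, apply Corollary~\ref{cor:3.4}(1) to get closure under inverses and to pin the image of a product down to $\{ab,ba\}$, and resolve the commuting, all-distinct case by the coefficient comparison $2|H|\cdot\frac{|G|}{|H|}=2|G|$ via Lemma~\ref{lem:3.3}(4)(iv) and Corollary~\ref{cor:3.4}(2)(iv). Your explicit weight-graded bookkeeping for why $P^{l}$ has no cross-term cancellation is a point the paper leaves implicit, but the argument is otherwise the same.
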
 
\begin{proof}
For all $s, s' \in S$, we show that $s^{-1} \in S$ and $s s' \in S$. 
Let $h = \varphi^{-1}(s)$ and $h' = \varphi^{-1}(s')$. 
First, we prove $s^{-1} \in S$. 
From (2) of Lemma~$\ref{lem:3.3}$, 
the monomials $x_{e'}^{|H|}$ and $x_{e'}^{|H|-2} x_{h} x_{h^{-1}}$ occur in $\Theta(H)$. 
Hence, the monomial $x_{e'}^{|G|-2} x_{h} x_{h^{-1}}$ occurs in $\Theta(H)^{\frac{|G|}{|H|}}$. 
Applying $\hat{\varphi}$ to this monomial, we find that the monomial $x_{e}^{|G|-2} x_{s} x_{\varphi(h^{-1})}$ occurs in $\Theta_{G}(S ; [x_{s}])$. 
Therefore, from (1) of Corollary~$\ref{cor:3.4}$, we have $s^{-1} = \varphi(h^{-1}) \in S$. 
Next, we prove $s s' \in S$ under the assumption that $s \neq e$ and $s' \neq e$ (because it is obvious for $s = e$ or $s' = e$).
If $s \neq e$ and $s' \neq e$, then $h \neq e$ and $h' \neq e$. 
Note that if $(s s')^{-1} \in S$, then $s s' \in S$ from the above result. 
From (3) of Lemma~$\ref{lem:3.3}$, the monomials $x_{e'}^{|H|}$, 
$x_{e'}^{|H|-3} x_{h} x_{h'} x_{{(h h')}^{-1}}$ and $x_{e'}^{|H|-3} x_{h} x_{h'} x_{{(h' h)}^{-1}}$ occur in $\Theta(H)$. 
Hence, the monomials $x_{e'}^{|G|-3} x_{h} x_{h'} x_{(h h')^{-1}}$ and $x_{e'}^{|G|-3} x_{h} x_{h'} x_{(h' h)^{-1}}$ occur in $\Theta(H)^{\frac{|G|}{|H|}}$. 
Applying $\hat{\varphi}$ to these monomials, 
we find that the monomials $x_{e}^{|G|-3} x_{s} x_{s'} x_{\varphi((h h')^{-1})}$ and $x_{e}^{|G|-3} x_{s} x_{s'} x_{\varphi((h' h)^{-1})}$ occur in $\Theta_{G}(S ; [x_{s}])$. 
We classify the relation between $h$ and $h'$ into the four following cases: 
\begin{enumerate}
\item $h h' \neq h' h$; 
\item $h = h'$; 
\item $h h' = h' h$, $h \neq h'$ and either $h$ or $h'$ is equal to $(h h')^{-1}$; 
\item $h h' = h' h$ and no two of $h$, $h'$, $(h h')^{-1}$ are equal. 
\end{enumerate}
If case~$(1)$ holds, 
then, from (1) of Corollary~$\ref{cor:3.4}$, we have $(s s')^{-1} = \varphi((h h')^{-1}) \in S$ or $(s s')^{-1} = \varphi((h' h)^{-1}) \in S$. 
If case~$(2)$ holds, 
then, from (1) of Corollary~$\ref{cor:3.4}$, we have $(s s')^{-1} = \varphi((h h)^{-1}) \in S$. 
If case~$(3)$ holds, then, from (1) of Corollary~$\ref{cor:3.4}$, we have $s^{2} s' = e$ or $s (s')^{2} = e$. 
Hence, $s s' = s^{-1} \in S$ or $s s' = s'^{-1} \in S$. 
If case~$(4)$ holds, then, from (4) of Lemma~$\ref{lem:3.3}$, the coefficient of the monomial $x_{e'}^{|H|-3} x_{h} x_{h'} x_{{(h h')}^{-1}}$ in $\Theta(H)$ is $2 |H|$. 
Therefore, the coefficient of the monomial $x_{e'}^{|G|-3} x_{h} x_{h} x_{(h h)^{-1}}$ in $\Theta(H)^{\frac{|G|}{|H|}}$ is $2 |H| \times \frac{|G|}{|H|} = 2 |G|$. 
From this, the coefficient of the monomial $x_{e}^{|G|-3} x_{s} x_{s'} x_{\varphi((h h')^{-1})}$ in $\Theta_{G}(S ; [x_{s}])$ is $2 |G|$. 
Hence, from (2) of Corollary~$\ref{cor:3.4}$, we have $s s' = s' s$, 
and from (1) of Corollary~$\ref{cor:3.4}$, we have $(s s')^{-1} = \varphi((h h')^{-1}) \in S$. 
\end{proof}

\section{Additional information on the main theorem}

The following theorem gives additional information on $S$ and $\varphi$ satisfying the condition of Theorem~$\ref{thm:3.1}$. 
\begin{thm}[Theorem~\ref{thm:1.2}]\label{thm:4.4}
Let $G$ be a finite group, $e$ be the unit element of $G$, and $S$ be a subset of $G$ such that $e \in S$ and $|S|$ divides $|G|$. 
If there exists a group $H$ and a bijective map $\varphi : H \rightarrow S$
such that $\varphi(e') = e$ and 
$$
\Theta_{G}(S ; [x_{s}]) = \hat{\varphi} \left( \Theta(H)^{\frac{|G|}{|H|}} \right) = \Theta\left(H; \left\{ x_{\varphi(h)} \mid h \in H\right\} \right)^{\frac{|G|}{|H|}}, 
$$
where $e'$ is the unit element of $H$, 
then $\varphi$ is either a group isomorphism or a group anti-isomorphism, and $S$ is group isomorphic to $H$. 
\end{thm}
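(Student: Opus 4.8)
The plan is to combine the sufficiency direction already established in Lemma~\ref{lem:3.5} with the necessity formula of Lemma~\ref{lem:3.2}, in order to reduce the hypothesis to the exact setting of the Formanek--Sibley theorem (Theorem~\ref{thm:1.3}) specialized to $K = \mathbb{C}$. The work splits into a normalization step producing the \emph{un-powered} determinant identity $\Theta(S) = \hat{\varphi}(\Theta(H))$, after which the conclusion is a direct citation.

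First I would invoke Lemma~\ref{lem:3.5}: the displayed hypothesis on $\Theta_{G}(S;[x_{s}])$ is precisely the sufficient condition proved there, so $S$ is a subgroup of $G$. In particular $S$ carries the group structure inherited from $G$, which is what makes the phrases ``group isomorphism'' and ``group anti-isomorphism'' meaningful for $\varphi$. Since $\varphi : H \to S$ is a bijection we have $|S| = |H|$, and hence $|G|/|S| = |G|/|H| =: l$.

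Next, because $S$ is now known to be a subgroup, Lemma~\ref{lem:3.2} gives $\Theta_{G}(S;[x_{s}]) = \Theta(S)^{l}$. Comparing this with the hypothesis yields
$$
\Theta(S)^{l} = \hat{\varphi}\bigl(\Theta(H)\bigr)^{l}
$$
in the polynomial ring $\mathbb{C}[x_{s}]$. The key step is to cancel the exponent $l$. Since $\mathbb{C}[x_{s}]$ is a unique factorization domain, the equality of $l$-th powers forces $\Theta(S) = \zeta\,\hat{\varphi}(\Theta(H))$ for some $l$-th root of unity $\zeta$; comparing the coefficient of the monomial $x_{e}^{|S|}$ on both sides pins down $\zeta = 1$. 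Indeed, this coefficient equals $1$ in each polynomial, as it arises only from the identity permutation in the determinant expansion (every diagonal entry of the group matrix being $x_{e}$), and on the right $\hat{\varphi}$ sends $x_{e'}^{|H|}$ to $x_{\varphi(e')}^{|H|} = x_{e}^{|S|}$ because $\varphi(e') = e$ and $|H| = |S|$. Thus $\Theta(S) = \hat{\varphi}(\Theta(H))$.

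Finally I would apply Theorem~\ref{thm:1.3} with $K = \mathbb{C}$, taking the group ``$G$'' there to be $H$ and the group ``$H$'' there to be $S$: the map $\varphi : H \to S$ is a bijection with $\varphi(e') = e$ satisfying $\hat{\varphi}(\Theta(H)) = \Theta(S)$, and $\mathbb{C}$ has characteristic $0$, which does not divide $|H|$. Theorem~\ref{thm:1.3} then yields that $\varphi$ is a group isomorphism or a group anti-isomorphism, and in either case $S$ is isomorphic to $H$, since every group is anti-isomorphic to itself via $g \mapsto g^{-1}$. The only genuine obstacle is the cancellation of the $l$-th power, that is, ruling out the spurious root-of-unity factor; the remainder is bookkeeping and the invocation of Theorem~\ref{thm:1.3}.
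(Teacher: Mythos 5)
Your proof is correct, but it takes a genuinely different route from the paper's after the common opening moves. Both arguments begin identically: Lemma~\ref{lem:3.5} shows $S$ is a subgroup, and Lemma~\ref{lem:3.2} then turns the hypothesis into $\Theta(S)^{l} = \hat{\varphi}\left(\Theta(H)\right)^{l}$ with $l = |G|/|H|$. From there you cancel the exponent: since $\mathbb{C}[x_{s}]$ is an integral domain (indeed a UFD) and $X^{l}-Y^{l}$ splits over $\mathbb{C}$, the two polynomials agree up to an $l$-th root of unity, which the coefficient of $x_{e}^{|S|}$ (equal to $1$ on both sides, coming only from the identity permutation) forces to be $1$; this yields the exact identity $\Theta(S) = \hat{\varphi}(\Theta(H))$, and you conclude by citing the Formanek--Sibley theorem (Theorem~\ref{thm:1.3}) with $K=\mathbb{C}$. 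The paper instead never cancels the power: it specializes the variables to complex numbers, observes that equality of $l$-th powers already gives $\Theta(S)\neq 0$ if and only if $\Theta\left(H;\{x_{\varphi(h)}\}\right)\neq 0$, translates nonvanishing of the group determinant into invertibility in the group algebra via the regular representation (Lemma~\ref{lem:}), deduces $\widetilde{\varphi^{-1}}\left(\mathcal{U}(\mathbb{C}S)\right)=\mathcal{U}(\mathbb{C}H)$, and invokes the unit-preserving-bijection criterion of Theorem~\ref{thm:4.3}. Your route buys a stronger intermediate fact (the un-powered determinant identity) and a shorter ending, at the cost of importing the full Formanek--Sibley theorem, which the paper positions as a corollary of Theorem~\ref{thm:4.4} rather than an ingredient; this is not circular, since Theorem~\ref{thm:1.3} is an external result, but it does weaken the paper's narrative that its theorem subsumes Formanek--Sibley. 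The paper's route shows that only the vanishing locus of the group determinant is needed, which is why it can work directly with the $l$-th powers. Both proofs are sound.
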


Let $\mathcal{U} \left( R \right)$ be the units of a ring $R$. 
To prove Theorem~$\ref{thm:4.4}$, we use the following theorem and lemma. 

\begin{thm}[{\cite[Theorem~6]{Andy}}]\label{thm:4.3}
Let $G$ and $H$ be finite groups with unit elements $e$ and $e'$, respectively. 
Let $\psi : G \to H$ be a bijection that induces a vector space isomorphism $\widetilde{\psi} : \mathbb{C} G \to \mathbb{C} H$. 
Then the following are equivalent: 
\begin{enumerate}
\item map $\psi : G \to H$ is an isomorphism or an anti-isomorphism; 
\item $\widetilde{\psi} \left( \mathcal{U} \left( \mathbb{C} G \right) \right) = \mathcal{U} \left( \mathbb{C} H \right)$ and $\psi(e) = e'$. 
\end{enumerate}
\end{thm}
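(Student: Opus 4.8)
The plan is to prove the two implications of the equivalence separately, with $(1)\Rightarrow(2)$ being routine and $(2)\Rightarrow(1)$ carrying the real content.

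For $(1)\Rightarrow(2)$ I would argue directly from the multiplicative behaviour of $\widetilde{\psi}$. If $\psi$ is a group isomorphism, then its linear extension $\widetilde{\psi}$ is a $\mathbb{C}$-algebra isomorphism, so it carries $\mathcal{U}(\mathbb{C}G)$ bijectively onto $\mathcal{U}(\mathbb{C}H)$ and sends the identity to the identity. If instead $\psi$ is an anti-isomorphism, then $\widetilde{\psi}$ is a $\mathbb{C}$-algebra anti-isomorphism; here the point to note is that from $uv=vu=e$ one gets $\widetilde{\psi}(u)\widetilde{\psi}(v)=\widetilde{\psi}(v)\widetilde{\psi}(u)=e'$, so $\widetilde{\psi}$ still maps units bijectively to units, and again $\psi(e)=e'$. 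Either way $(2)$ holds.

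For the converse $(2)\Rightarrow(1)$ the idea is to translate the invertibility condition into an identity of group determinants and then invoke Theorem~\ref{thm:1.3}. First I would note that since $\widetilde{\psi}$ is a vector space isomorphism, $|G|=|H|=:n$. Recall from Section~$2$ that the regular representation $L$ sends $\alpha=\sum_{g}x_{g}g$ to $M_{G}(G;[x_{g}])$, so $\alpha$ is a unit if and only if $\Theta(G)([x_{g}])=\det L(\alpha)\neq 0$, and likewise for $H$. Writing $\widetilde{\psi}(\alpha)=\sum_{h}x_{\psi^{-1}(h)}h$, the hypothesis $\widetilde{\psi}(\mathcal{U}(\mathbb{C}G))=\mathcal{U}(\mathbb{C}H)$ (together with bijectivity of $\widetilde{\psi}$) says exactly that the two homogeneous degree-$n$ polynomials $\Theta(G)([x_{g}])$ and $Q([x_{g}]):=\Theta(H)([x_{\psi^{-1}(h)}])$ have the same zero locus in $\mathbb{C}^{n}$. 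By the Nullstellensatz they then have the same radical, hence the same irreducible factors up to scalars. Now I would use the Frobenius factorization $\Theta(G)=\prod_{i}p_{i}^{\deg p_{i}}$, in which each distinct irreducible factor occurs to the power of its own degree; since $Q$ is just $\Theta(H)$ with its variables relabeled by the bijection $\psi^{-1}$, it inherits the same structure, $Q=\prod_{j}\tilde{q}_{j}^{\deg\tilde{q}_{j}}$. Because the two polynomials share the same set of irreducible factors and each factor is raised precisely to its own degree, matching factors forces the exponents to agree, so $\Theta(G)=c\,Q$ for some constant $c$. Comparing the coefficient of $x_{e}^{n}$ (which is $1$ in $\Theta(G)$, and equals $c$ in $cQ$ because $\psi(e)=e'$ sends the monomial $x_{e'}^{n}$ of $\Theta(H)$ to $x_{e}^{n}$) gives $c=1$. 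Applying $\hat{\psi}$ to $\Theta(G)=Q$ then yields $\hat{\psi}(\Theta(G))=\Theta(H)$.

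With $\psi(e)=e'$ and $\hat{\psi}(\Theta(G))=\Theta(H)$ in hand, Theorem~\ref{thm:1.3} (the case $K=\mathbb{C}$, with $\varphi=\psi$) immediately gives that $\psi$ is either a group isomorphism or a group anti-isomorphism, completing $(2)\Rightarrow(1)$. I expect the main obstacle to be the passage from \emph{equal zero loci} to \emph{equal polynomials}: this step is false for arbitrary homogeneous polynomials of the same degree and the same radical, and it genuinely relies on the special Frobenius feature that in a group determinant every irreducible factor appears with multiplicity equal to its degree, a property that survives the variable relabeling by $\psi^{-1}$. Once that is secured, the reduction to Theorem~\ref{thm:1.3} is the natural finish.
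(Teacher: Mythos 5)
Your argument is correct, but note first that the paper contains no proof of this statement to compare against: Theorem~\ref{thm:4.3} is imported verbatim from \cite[Theorem~6]{Andy} and used as a black box in the proof of Theorem~\ref{thm:4.4}. So your proposal has to be judged on its own merits, and it holds up. The easy direction $(1)\Rightarrow(2)$ is fine as written (for surjectivity onto $\mathcal{U}(\mathbb{C}H)$, just note that $\widetilde{\psi}^{-1}$ is the linear extension of $\psi^{-1}$, which is again an (anti-)homomorphism). For $(2)\Rightarrow(1)$, each step checks out: unit preservation in both directions (which the hypothesis plus bijectivity of $\widetilde{\psi}$ does give) translates, via the regular-representation criterion that the paper itself proves as Lemma~\ref{lem:}, into the statement that $\Theta(G)$ and $Q=\Theta(H)$ with variables relabeled by $\psi^{-1}$ have the same zero locus in $\mathbb{C}^{n}$; the Nullstellensatz gives equal radicals, hence the same irreducible factors up to scalars (both polynomials are nonzero, since each has $x_{e}^{n}$, resp.\ $x_{e'}^{n}$, with coefficient $1$); and you correctly identify the crux, namely that equal radicals do not give equal polynomials in general, but do here because of Frobenius's theorem that every irreducible factor of a group determinant occurs with multiplicity equal to its degree, a property preserved by the variable relabeling. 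The normalization $c=1$ via the coefficient of $x_{e}^{n}$ is also exactly where the hypothesis $\psi(e)=e'$ enters, which is as it should be: composing an isomorphism with left translation by $g_{0}\neq e$ preserves units but is not an (anti-)isomorphism, and in your argument this failure mode is caught precisely at the constant $c$. Finishing with Theorem~\ref{thm:1.3} (Formanek--Sibley, $K=\mathbb{C}$) is legitimate and not circular: that theorem is an independent external result, and the paper positions Theorem~\ref{thm:4.4} as a generalization of, not an independent route to, the $K=\mathbb{C}$ case of Theorem~\ref{thm:4.5}. The one caveat worth recording is that your proof makes Theorem~\ref{thm:4.3} logically downstream of Formanek--Sibley, so it could not be used to give a new proof of their theorem; whether the original proof in \cite{Andy} shares this dependence cannot be checked from the paper at hand.
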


\begin{lem}\label{lem:}
We regard the variable $x_{g}$ as a complex number for any $g \in G$. 
Then, the element $\sum_{g \in G} x_{g} g$ is invertible in $\mathbb{C} G$ if and only if $\Theta(G; \{x_{g} \mid g \in G \})$ is invertible in $\mathbb{C}$. 
\end{lem}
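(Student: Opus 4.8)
The plan is to exploit the regular representation $L : \mathbb{C}G \to \Mat(|G|, \mathbb{C})$ introduced in Section~$2$, under which $L(\alpha) = M_{G}(G; [x_{g}])$ for $\alpha = \sum_{g \in G} x_{g} g$. The relevant features of $L$ are that it is an injective (unital) ring homomorphism, sending the unit $e$ of $\mathbb{C}G$ to the identity matrix, and that $\det L(\alpha) = \Theta(G; \{ x_{g} \mid g \in G \})$ holds by the definition of the group determinant. Since $\mathbb{C}$ is a field, a complex number is invertible in $\mathbb{C}$ precisely when it is nonzero, so the assertion reduces to showing that $\alpha$ is invertible in $\mathbb{C}G$ if and only if $L(\alpha)$ is invertible in $\Mat(|G|, \mathbb{C})$, i.e. if and only if $\det L(\alpha) \neq 0$.

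The forward direction is immediate: if there is some $\beta \in \mathbb{C}G$ with $\alpha \beta = \beta \alpha = e$, then applying the ring homomorphism $L$ yields $L(\alpha) L(\beta) = L(e)$, which is the identity matrix, so $L(\alpha)$ is an invertible matrix and hence $\det L(\alpha) \neq 0$. For the converse, I would suppose $\det L(\alpha) \neq 0$, so that $L(\alpha)$ is invertible as a matrix; equivalently, left multiplication by $\alpha$ is a bijective $\mathbb{C}$-linear endomorphism of $\mathbb{C}G$. In particular $e$ lies in its image, so there exists $\beta \in \mathbb{C}G$ with $\alpha \beta = e$. Applying $L$ gives $L(\alpha) L(\beta) = I$; because these are square matrices of the same finite size, a one-sided inverse is automatically two-sided, so $L(\beta) L(\alpha) = I = L(e)$, and the injectivity of $L$ forces $\beta \alpha = e$. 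Thus $\beta$ is a genuine two-sided inverse of $\alpha$, and $\alpha$ is invertible in $\mathbb{C}G$.

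The only genuinely delicate step is upgrading the one-sided relation $\alpha \beta = e$ to a two-sided inverse; this is where the finiteness of $\dim_{\mathbb{C}} \mathbb{C}G = |G|$ and the faithfulness of the regular representation are essential, since they are exactly what let me pass the matrix identity $L(\beta) L(\alpha) = I$ back through the injective map $L$ to the group algebra. Everything else is a formal consequence of $L$ being a unital ring homomorphism together with the fact that $\det$ detects invertibility of matrices over a field.
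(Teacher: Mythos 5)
Your proof is correct, but it takes a genuinely different route from the paper's. The paper disposes of the lemma by invoking the Artin--Wedderburn structure theorem: it cites the fact that $\mathbb{C}G$ is isomorphic as a $\mathbb{C}$-algebra to a direct product of matrix algebras over $\mathbb{C}$, and reads off the equivalence ``$\alpha$ invertible $\iff \det L(\alpha) \neq 0$'' from that decomposition (essentially, from the factorization of $\Theta(G)$ into determinants of the irreducible blocks). You instead use only two elementary facts: that $L$ is a faithful unital ring homomorphism into $\Mat(|G|, \mathbb{C})$, and that a one-sided inverse in a matrix algebra (equivalently, in any finite-dimensional algebra) is automatically two-sided. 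Your handling of the converse --- producing $\beta$ with $\alpha\beta = e$ from surjectivity of left multiplication, then pulling $L(\beta)L(\alpha) = I$ back through the injective map $L$ to get $\beta\alpha = e$ --- is exactly the delicate point, and you treat it correctly. What each approach buys: the paper's argument is a one-line appeal to a standard structure theorem and fits the Frobenius-factorization theme of the group determinant, but it leans on semisimplicity of $\mathbb{C}G$; your argument avoids the structure theorem entirely and would work verbatim for the regular representation of any finite-dimensional unital algebra over any field, so it is both more elementary and more general. Either proof is acceptable here.
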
 
\begin{proof}
Let $\alpha := \sum_{g \in G} x_{g} g$ in the group algebra $\mathbb{C} G$, 
where we assume that $x_{g}$ is a complex number for any $g \in G$. 
Then, as mentioned at the end of Section~2, 
the map $L : \mathbb{C}G \rightarrow \Mat(|G|, \mathbb{C})$ given by $\alpha \mapsto M_{G}(S; [x_{s}])$ is the regular representation from $\mathbb{C} G$ to $\Mat(|G|, \mathbb{C})$. 
Therefore, since the group algebra $\mathbb{C}G$ is isomorphic as a $\mathbb{C}$-algebra to a direct product of matrix algebras over $\mathbb{C}$ (see, e.g.,~\cite[Theorem~5.5.6]{benjamin2013}), 
the element $\alpha$ is invertible in $\mathbb{C}G$ if and only if $\det{L(\alpha)} \neq 0$. 
\end{proof}

The following is a proof of Theorem~$\ref{thm:4.4}$. 
\begin{proof}
From Lemma~$\ref{lem:3.5}$, $S$ is a group. 
Hence, from Lemma~$\ref{lem:3.2}$, we have 
$\Theta(S)^{\frac{|G|}{|S|}} = \Theta_{G}(S ; [x_{s}]) = \hat{\varphi} \left( \Theta(H)^{\frac{|G|}{|H|}} \right) = \Theta \left(H; \left\{ x_{\varphi(h)} \mid h \in H\right\} \right)^{\frac{|G|}{|H|}}$. 
We regard the variable $x_{s}$ as a complex number for any $s \in S$. 
Then, we have: 
\begin{enumerate}
\item[(1)] $\Theta(S) \neq 0$ if and only if $\Theta \left(H; \left\{ x_{\varphi(h)} \mid h \in H\right\} \right) \neq 0$. 
\end{enumerate}
In addition, from Lemma~$\ref{lem:}$, we have: 
\begin{enumerate}
\item[(2)] $\Theta(S) \neq 0$ if and only if $\sum_{s \in S} x_{s} s$ is invertible; 
\item[(3)] $\Theta \left(H; \left\{ x_{\varphi(h)} \mid h \in H\right\} \right) \neq 0$ if and only if $\sum_{h \in H} x_{\varphi(h)} h$ is invertible. 
\end{enumerate}
From (1)--(3), $\sum_{s \in S} x_{s} s$ is invertible if and only if $\sum_{h \in H} x_{\varphi(h)} h$ is invertible. 
On the other hand, we have 
\begin{align*}
\widetilde{\varphi^{-1}} \left( \sum_{s \in S} x_{s} s \right) = \sum_{s \in S} x_{s} \varphi^{-1}(s) = \sum_{h \in H} x_{\varphi(h)} h, 
\end{align*}
where $\widetilde{\varphi^{-1}}$ is the $\mathbb{C}$-linear map induced by $\varphi^{-1}$. 
Therefore, we have $\widetilde{\varphi^{-1}} (\mathcal{U}(\mathbb{C} S)) = \mathcal{U}(\mathbb{C} H)$. 
From Theorem~$\ref{thm:4.3}$, $\varphi$ is a group isomorphism or a group anti-isomorphism, and $S$ is group isomorphic to $H$.  
\end{proof}

Theorem~$\ref{thm:4.4}$ contains the case where $K = \mathbb{C}$ of the following theorem. 

\begin{thm}[{Special case of \cite[Theorem~$5$]{Formanek_1991}}]\label{thm:4.5}
Let $G$ and $H$ be finite groups with unit elements $e$ and $e'$, respectively. 
Let $K$ be a field whose characteristic does not divide $|G|$ and $\varphi: G \to H$ be a bijection such that $\varphi(e) = e'$. 
Suppose that $\hat{\varphi} \left( \Theta(G; \{ x_{g} \mid g \in G \} ) \right) = \Theta(H; \{ x_{h} \mid h \in H \})$. 
Then, the map $\varphi$ is either a group isomorphism or a group anti-isomorphism. 
In any case, $G$ is isomorphic to $H$, 
since any group is anti-isomorphic to itself. 
\end{thm}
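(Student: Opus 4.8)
The plan is to recognize Theorem~$\ref{thm:4.5}$, at least in the case $K = \mathbb{C}$, as the special case $S = H$ of Theorem~$\ref{thm:4.4}$, with the given bijection $\varphi$ playing the role of the map into the subset. Concretely, I would apply Theorem~$\ref{thm:4.4}$ with its ambient group taken to be $H$, its subset taken to be $S = H$ itself, its abstract group taken to be $G$, and its bijection taken to be our $\varphi : G \to H$. Since $\varphi(e) = e'$ by hypothesis and $S = H$ is the whole ambient group, the side conditions $e' \in S$ and $|S|$ divides $|H|$ hold automatically, so the setup of Theorem~$\ref{thm:4.4}$ is in force.

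Next I would check that the determinant identity required by Theorem~$\ref{thm:4.4}$ collapses exactly to the hypothesis of Theorem~$\ref{thm:4.5}$. Because $S = H$ equals the full ambient group, its left-hand side is $\Theta_{H}(H; [x_{s}]) = \Theta(H)$. Because $\varphi$ is a bijection we have $|G| = |H|$, so the exponent $|H|/|G|$ equals $1$ and the right-hand side is simply $\hat{\varphi}(\Theta(G))$. Thus the identity demanded by Theorem~$\ref{thm:4.4}$ reads $\Theta(H) = \hat{\varphi}(\Theta(G))$, which is precisely the assumed equality $\hat{\varphi}(\Theta(G)) = \Theta(H)$. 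Invoking Theorem~$\ref{thm:4.4}$ then yields that $\varphi$ is a group isomorphism or a group anti-isomorphism and that $H$ is group isomorphic to $G$; as anti-isomorphism still forces $G \cong H$ (any group being anti-isomorphic to itself via $g \mapsto g^{-1}$), this settles the statement over $\mathbb{C}$.

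The genuinely harder point is passing from $K = \mathbb{C}$ to an arbitrary field $K$ whose characteristic does not divide $|G|$, since the engine behind Theorem~$\ref{thm:4.4}$ is complex in nature. The invertibility criterion in Lemma~$\ref{lem:}$ relies on the Wedderburn decomposition of $\mathbb{C}G$ into matrix algebras, and the coefficient computations in Lemma~$\ref{lem:3.3}$(4) produce values such as $|G|/3$, $|G|$, and $2|G|$ whose visibility in the relevant case analysis presupposes that the characteristic does not kill them. My approach would be to replace $\mathbb{C}$ throughout by a splitting field (or the algebraic closure) $\overline{K}$ for $G$ and $H$, apply Maschke's theorem to obtain semisimplicity of $\overline{K}G$, and thereby re-establish the equivalence between invertibility of $\sum_{g} x_{g} g$ and nonvanishing of $\Theta$, so that Theorem~$\ref{thm:4.3}$ can still be applied via $\hat{\varphi}$ intertwining the units.

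The step I expect to be the main obstacle is exactly this characteristic bookkeeping: re-deriving Lemma~$\ref{lem:3.3}$ and the invertibility lemma over $\overline{K}$ rather than $\mathbb{C}$, and verifying that none of the distinguishing coefficients degenerate (the coefficient $2|G|$ being the delicate one in even characteristic). Should that re-derivation prove awkward, the honest fallback is that Theorem~$\ref{thm:4.5}$ is literally the cited special case of~\cite[Theorem~$5$]{Formanek_1991}, so one may defer to the original Formanek--Sibley argument for general $K$ and regard the reduction above as a new, self-contained proof in the case $K = \mathbb{C}$.
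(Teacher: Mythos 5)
Your reduction of the $K=\mathbb{C}$ case to Theorem~\ref{thm:4.4} (taking the ambient group to be $H$, the subset $S=H$, and noting the exponent collapses to $1$ since $|G|=|H|$) is exactly the observation the paper makes when it says Theorem~\ref{thm:4.4} ``contains the case where $K=\mathbb{C}$'' of this statement, and for general $K$ the paper likewise offers no independent argument but simply cites \cite[Theorem~5]{Formanek_1991}. Your proposal is therefore correct and follows essentially the same route; the speculative paragraph about splitting fields is unnecessary given that the general-characteristic case is deferred to the original Formanek--Sibley proof, as you yourself note in your fallback.
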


\clearpage

\thanks{\bf{Acknowledgments}}
We are deeply grateful to professor Hiroyuki Ochiai, who provided helpful comments and suggestions on this work.

\bibliographystyle{amsplain}
\bibliography{reference}











\end{document}